\providecommand{\U}[1]{\protect\rule{.1in}{.1in}}
\newtheorem{The}{Theorem}
\newtheorem{Cor}[The]{Corollary}
\newtheorem{Pro}[The]{Proposition}
\newtheorem{Lem}[The]{Lemma}
\numberwithin{equation}{section}
\numberwithin{The}{section}
\def\({\left(}       \def\){\right)}
\begin{document}
	
\title{Level sets of the hyperbolic derivative for analytic self-maps of the unit disk}

\author[J. Arango]{Juan Arango}
\address{Escuela de Matem\'aticas, Universidad Nacional de Colombia, Medell\'{\i}n, Colombia}
\email{jharango@unal.edu.co}

\author[H. Arbel\'aez]{Hugo Arbel\'{a}ez}
\address{Escuela de Matem\'aticas, Universidad Nacional de Colombia, Medell\'{\i}n, Colombia}
\email{hjarbela@unal.edu.co}

\author[D. Mej\'ia]{Diego Mej\'ia}
\address{Escuela de Matem\'aticas, Universidad Nacional de Colombia, Medell\'{\i}n, Colombia}
\email{dmejia@unal.edu.co}

	\maketitle
	
	\begin{abstract}
		Let the function $\varphi$ be holomorphic in the unit disk $\mathbb{D}$ of the complex plane $\mathbb{C}$ and let $\varphi (\mathbb{D})\subset \mathbb{D}$. We study the level sets and the critical points of the hyperbolic derivative of $\varphi$, $$|D_{\varphi}(z)|:=\frac{(1-|z|^2)|\varphi'(z)|}{1-|\varphi(z)|^2}.$$ In particular, we show how the Schwarzian derivative of $\varphi$ reveals the nature of the critical points.\\\\\\
		\textbf{Keywords}: Hyperbolic derivative, Level set, Critical point, Trajectory, Schwarzian derivative, Hyperbolic order.\\\\\\
		\textbf{Mathematics Subject Classification}: 30H05, 30E99, 30J99. 
	\end{abstract}
	\section{Introduction}\label{sec1}
	Let $\varphi $ be analytic and not constant in the unit disk $\mathbb{D}$ and let $\varphi (\mathbb{D})\subset \mathbb{D}$. We will study the \textit{level sets} of the \textit{hyperbolic derivative} of $\varphi$, namely the sets 
	\begin{equation*}
		C_{\varphi}(t):=\Big \{ z\in \mathbb{D}:(1-\left\vert z\right\vert ^{2})
		\frac{\left\vert \varphi^{\prime}(z)  \right\vert }{1-\left\vert
			\varphi (z)  \right\vert ^{2}}=t \Big \}, \;\;\; 0\leq t< 1.
	\end{equation*}
	In \cite{Po08} Pommerenke studied the level sets
	\begin{equation*}
		\Big \{ z\in \mathbb{D}:(1-\left\vert z\right\vert ^{2})
		\left\vert f^{\prime}(z)  \right\vert =t \Big \}, \;\;\; 0<t<+\infty,
	\end{equation*}
	for an analytic function $f$ defined in $\mathbb{D}$. More recently, in \cite {AAM17}, the authors  considered meromorphic functions $g$ in $\mathbb{D}$ and studied the level sets
	\begin{equation*}
		\Big \{ z\in \mathbb{D}:(1-\left\vert z\right\vert ^{2})
		\frac{\left\vert g^{\prime}(z)  \right\vert }{1+\left\vert
			g(z)  \right\vert ^{2}}=t \Big \}, \;\;\; 0<t<+\infty.
	\end{equation*} 
	Two differential operators will play an important role in this work:
	\begin{equation*}
		D_{\varphi}(z):=(1-|z|^2)\frac{\varphi^{\prime}(z)}{1-\left\vert
			\varphi (z)  \right\vert ^{2}}
	\end{equation*}
	and
	\begin{align}\label{eq1.2}
		A_{\varphi}(z):& =(1-|z|^2)\frac{\partial}{\partial z}\log |D_{\varphi} (z)|=\frac{1-|z|^2}{|D_{\varphi} (z)|}\frac{\partial|D_{\varphi} (z)|}{\partial z}\\
		&=\frac{1-|z|^2}{2}\frac{\varphi ''(z)}{\varphi '(z)}-\bar{z}+\frac{(1-|z|^2)\overline{\varphi(z)}\varphi'(z)}{1-|\varphi(z)|^2},\notag	
	\end{align}
	being $\infty$ at the zeros of $\varphi '$.\\\\
	The first order operator satisfies the chain rule
	\begin{equation*}
		D_{\varphi \circ \psi}(z)=D_{\varphi}(\psi(z))D_{\psi}(z).
	\end{equation*}
	In particular, if $\tau$ belongs to the group $\textup{M\"ob}(\mathbb{D})$ of conformal automorphisms of the unit disk, then
	\begin{equation*}
		D_{\varphi \circ \tau}(z)=\frac{\tau '(z)}{|\tau '(z)|}D_{\varphi}(\tau (z));\;\;\;D_{\tau \circ \varphi}(z)=\frac{\tau '(\varphi (z))}{|\tau '(\varphi (z))|}D_{\varphi}(z).
	\end{equation*}
	\noindent The second order differential operator
	verifies the chain rule
	\begin{equation*}
	A_{\varphi \circ \psi}(z)=\frac{1-|z|^2}{1-|\psi (z)|^2}A_{\varphi}(\psi (z))\psi '(z)+A_{\psi}(z),
	\end{equation*}
	at points $z$ for which $\psi'(z)$ and $\varphi'(\psi(z))$ are not zero. It can be seen (see Proposition \ref{prop1}) that $A_\varphi\equiv 0$ in a neigborhood of a point if and only if $\varphi \in \textup{M\"ob}(\mathbb{D})$; hence, if $\tau \in \textup{M\"ob}(\mathbb{D})$, then
	\begin{equation}\label{eq1.3}
	A_{\varphi \circ \tau}(z)= \frac{\tau'(z)}{|\tau' (z)|^2}A_{\varphi}(\tau (z));\;\;\;A_{\tau \circ \varphi}(z)=A_{\varphi}(z).
	\end{equation}
	With a different notation, these differential operators have been used by Ma and Minda (see for instance \cite{MaMi95}). For $\varphi$ locally univalent, Ma and Minda defined in \cite {MaMi94} the \textit {(upper) hyperbolic (linearly invariant) order} of $\varphi$ by
	\begin{equation*}
	\alpha(\varphi):=\sup_{z\in \mathbb{D}}|A_{\varphi}(z)|,
	\end{equation*}
	which is, clearly, $+\infty$ when $\varphi'$ has zeros.
	We can also define the \textit{lower hyperbolic order} of $\varphi$ by
	\begin{equation*}
	\mu(\varphi):=\inf_{z\in \mathbb{D}}|A_{\varphi}(z)|.
	\end{equation*}
	The invariance of these orders is a consequence of \eqref{eq1.3}; that is,
	\begin{equation*}
	\alpha(\sigma \circ \varphi \circ \tau)=\alpha(\varphi)\;\; \textup{and}\;\; \mu(\sigma \circ \varphi \circ \tau)=\mu(\varphi)
	\end{equation*}
	for all $\sigma,\tau \in \textup{M\"ob}(\mathbb{D})$.\\\\
	This paper is organized as follows. In Section \ref{sec2} we define the orthogonal trajectories to the level sets as solutions of an initial value problem. It is shown that the hyperbolic lower order is always zero. Also, for $\varphi$ locally univalent, if $A_\varphi$ never vanishes and all trajectories are defined in the same interval, we prove that the level sets $C_\varphi(t)$ are open Jordan arcs. In Section \ref{sec3} we study the critical points of $|D_\varphi|$. The main result in this section is Theorem \ref{th3}, where we show how the Schwarzian derivative reveals the nature of these critical points. Finally, in Section \ref{sec4} we give several examples to illustrate our results.

	\section{Orthogonal trajectories to the level sets}\label{sec2}
	We want to define a family of trajectories orthogonal to the level sets of $|D_\varphi|$. To this end, let $E:=\{z\in \mathbb{D}:\;\varphi'(z),\,A_\varphi(z) \neq 0\}$. From \eqref{eq1.2}, the gradient of $|D_\varphi|$ at $z_0\in E$ verifies
	\begin{align*}
		\nabla |D_\varphi|(z_0)&= 2\overline{\frac{\partial|D_\varphi|}{\partial z}}(z_0)=\frac{2|D_\varphi(z_0)|\overline{A_\varphi(z_0)}}{1-|z_0|^2}\\
		&=\frac{2|D_\varphi(z_0)||A_\varphi(z_0)|^2}{(1-|z_0|^2)A_\varphi(z_0)}.
	\end{align*}
	Suppose that $c(t)$, $0<t<1$, is a positive smooth function. Then, the initial value problem
	\begin{equation}\label{eq2.1}
		\left\{
		\begin{array}
			[c]{c}%
			z^{\prime}(t)=\dfrac{c(t)}{A_\varphi(z(t))}\\
			z(t_{0})=z_{0}
		\end{array}
		\right.
	\end{equation}
	has a unique solution, near $z_0$, orthogonal to the level sets $C_\varphi (t)$. We would like to choose $c(t)$ in such a way that the parameter $t$ of the solution $z(t)$ coincides with the level of the point; that is, 
	\begin{equation}\label{eq2.2}
	|D_\varphi(z(t))|=\frac{(1-|z(t)|^2)|\varphi'(z(t))|}{1-|\varphi(z(t))|^2}=t.
	\end{equation}
	 By \eqref{eq2.1},
	\begin{align*}
		\frac{1}{t}&=\dfrac{d}{dt}\log t=\frac{d}{dt}\log|D_\varphi (z(t))|\\
		&=2\operatorname{Re}\frac{A_\varphi(z(t))z'(t)}{1-|z(t)|^2}\\
		&=\frac{2c(t)}{1-|z(t)|^2}.
	\end{align*}
	Hence, $c(t)=\dfrac{1-|z(t)|^2}{2t}$.\\\\
	Now, we redefine the initial value problem \eqref{eq2.1}. Let us consider the function $G(t,z):=\dfrac{1-|z|^{2}}{2tA_{\varphi}(z)}$ in the domain $\Omega:=(0,1)\times E$. By the general theory of ordinary differential equations \cite{So79}, the initial value problem
	\begin{equation}\label{eq2.3}
		\left\{
		\begin{array}
			[c]{c}%
			z^{\prime}(t)=G(t,z\left(  t\right)  )\\
			z(t_{0})=z_{0}%
		\end{array}
		\right.
	\end{equation}
	has a unique maximal solution $z(t)$, defined for $t \in (\omega^-,\omega^+)$, such that $(t,z(t))$ tends to $\partial \Omega=\big ([0,1]\times \partial E\big )\cup \big (\{0,1\}\times \mathbb{D}\big )$ when $t\to \omega ^{\pm}$. This solution is called the \textit{trajectory} of $\varphi$ through the point $z_0$. Since the parameter $t$ is the level of the point $z(t)$, then $(t,z(t))$ does not accumulate on $\{0,1\}\times E$. Therefore, the trajectory $z(t)$ constitutes an open Jordan arc $\Gamma$ with $\overline{\Gamma}\setminus \Gamma\subset \partial E$. We write 
	\begin{equation}\label{eq2.4}
		\Gamma^+:=\{z(t): t_0\leq t<\omega^+\}\;\;\; \text{and}\;\;\; \Gamma^-:=\{z(t): \omega^-<t\leq t_0\}.
	\end{equation}
	\begin{The}\label{th1}
		For any trajectory $\Gamma$, $\inf_{z\in \Gamma^+}|A_\varphi(z)|=0$.	
	\end{The}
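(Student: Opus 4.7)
The plan is to argue by contradiction: suppose $m := \inf_{z\in\Gamma^+}|A_\varphi(z)| > 0$, and derive a contradiction from the maximality of the trajectory.

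From the ODE \eqref{eq2.3} one has $|z'(t)| = (1-|z(t)|^2)/(2t|A_\varphi(z(t))|)$, so the standing hypothesis together with $\omega^+\le 1$ yields
\[
\int_{t_0}^{\omega^+}|z'(t)|\,dt \;\le\; \int_{t_0}^{\omega^+}\frac{dt}{2tm} \;\le\; \frac{1}{2m}\log\frac{1}{t_0},
\]
and the parallel hyperbolic bound $\int_{t_0}^{\omega^+}|z'(t)|/(1-|z(t)|^2)\,dt \le \frac{1}{2m}\log(1/t_0)$. The Euclidean estimate makes $z(t)$ Cauchy, so $z(t)\to z^*\in\overline{\mathbb{D}}$ as $t\to\omega^+$; the hyperbolic estimate confines the curve to a hyperbolic ball about $z(t_0)$, which is a compact subset of $\mathbb{D}$, and hence forces $z^*\in\mathbb{D}$.

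I next invoke the fact noted in the excerpt that $\overline{\Gamma}\setminus\Gamma\subset\partial E$, so $z^*\in \partial E\cap \mathbb{D}$, and since $E$ is open this forces $\varphi'(z^*)=0$ or $A_\varphi(z^*)=0$. The first alternative is immediately ruled out: by continuity of $|D_\varphi|$ on $\mathbb{D}$ it would give $\omega^+=\lim t=|D_\varphi(z^*)|=0$, contradicting $\omega^+\ge t_0>0$. In the second alternative, $A_\varphi$ is continuous at $z^*$ (since $\varphi'(z^*)\neq 0$, so the expression \eqref{eq1.2} is regular there), whence $|A_\varphi(z(t))|\to 0$, contradicting $|A_\varphi|\ge m>0$ on $\Gamma^+$.

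I expect the main obstacle to be the convergence step: one must separately rule out that $z(t)$ escapes to $\partial\mathbb{D}$, which is exactly where the hyperbolic (as opposed to merely Euclidean) length estimate is essential. Once $z^*\in\mathbb{D}$ is secured, the identification of $z^*$ via $\partial E$ and the final contradiction are straightforward continuity arguments.
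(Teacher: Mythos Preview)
Your proof is correct and follows essentially the same approach as the paper. The paper derives the identical hyperbolic length estimate
\[
\log\frac{t}{t_0}\;\ge\;2\inf_{z\in\Gamma^+}|A_\varphi(z)|\cdot d_{\mathbb D}(z(t_0),z(t)),
\]
then takes a subsequential limit $z(t_n)\to\zeta$ and runs the same case analysis on $\zeta\in\partial E$; the only cosmetic differences are that the paper argues directly rather than by contradiction and uses subsequences instead of your full Cauchy limit.
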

	\begin{proof}
		For $t_0<t<\omega^+$,
		\begin{align}\label{eq2.5}
		\log\frac{t}{t_0}=\int_{t_0}^{t}\frac{1}{s}ds&=\int_{t_0}^{t}\frac{2|A_{\varphi}(z(s))z^{\prime}(s)|}{1-|z(s)|^2}ds\notag\\
		&\geq 2\inf_{z\in \Gamma^+}|A_{\varphi}(z)|\int_{t_0}^{t}\frac{|z^{\prime}(s)|}{1-|z(s)|^2}ds\notag\\
		&\geq 2\inf_{z\in \Gamma^+}|A_{\varphi}(z)|d_{\mathbb{D}}(z(t_0),z(t)),
		\end{align}
		where $d_{\mathbb{D}}$ is hyperbolic distance in $\mathbb{D}$.\\
		Take any increasing sequence $\{t_n\}$ in $(t_0,\omega^+)$ converging to $\omega^+$ such that $\{z(t_n)\}$ converges to a point $\zeta$. Since $\overline{\Gamma^+}\setminus\Gamma^+\subset \partial E$ then either $\zeta\in \mathbb{T}$, in which case $d_{\mathbb{D}}(z(t_0),z(t_n))\to + \infty$ as $t_n\to \omega^+$, or else $A_{\varphi}(\zeta)=0,\infty$. But, $\omega^+=\lim t_n=\lim|D_\varphi(z(t_n))|=|D_\varphi(\zeta)|$; so, $A_{\varphi}(\zeta)$ cannot be infinite. In any case,  it follows from \eqref{eq2.5} that $\inf_{z\in \Gamma^+}|A_{\varphi}(z)|=0$.
	\end{proof}
	\noindent As a direct consequence, we have
	\begin{Cor}\label{cor1}
		$\mu (\varphi)=0$.
	\end{Cor}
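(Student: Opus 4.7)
The plan is to deduce the corollary almost immediately from Theorem \ref{th1}. Since $\mu(\varphi)$ is the infimum of the nonnegative quantity $|A_\varphi|$, it suffices to exhibit points of $\mathbb{D}$ at which $|A_\varphi|$ is arbitrarily small, and Theorem \ref{th1} already provides this on any single trajectory $\Gamma^+\subset\mathbb{D}$. So the only real content is verifying that at least one trajectory exists, i.e.\ that the domain $E=\{z\in\mathbb{D}:\varphi'(z),A_\varphi(z)\neq 0\}$ of \eqref{eq2.3} is non-empty.

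I would split the argument into two cases. If $E\neq\emptyset$, I pick any $z_0\in E$ and form the trajectory $\Gamma$ through $z_0$ given by \eqref{eq2.3}; Theorem \ref{th1} then yields $\inf_{z\in\Gamma^+}|A_\varphi(z)|=0$, and since $\Gamma^+\subset\mathbb{D}$ we obtain $\mu(\varphi)\leq 0$, hence $\mu(\varphi)=0$ by nonnegativity. If instead $E=\emptyset$, then on the open set $\{z\in\mathbb{D}:\varphi'(z)\neq 0\}$ (which is non-empty because $\varphi$ is non-constant) the function $A_\varphi$ vanishes identically. By Proposition \ref{prop1} this forces $\varphi\in\textup{M\"ob}(\mathbb{D})$, whence $A_\varphi\equiv 0$ on all of $\mathbb{D}$ and $\mu(\varphi)=0$ trivially.

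The only real obstacle is this case split, needed because Theorem \ref{th1} presumes the existence of a trajectory. Once the Möbius case is dispatched by Proposition \ref{prop1}, the corollary is an immediate consequence of the preceding theorem.
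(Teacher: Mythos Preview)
Your proof is correct and follows the paper's approach, which treats the corollary as an immediate consequence of Theorem \ref{th1} without any explicit argument. Your case analysis is more careful than the paper's one-line remark; note, however, that in the case $E=\emptyset$ you do not actually need Proposition \ref{prop1}: since $\varphi$ is non-constant there is some $z_0$ with $\varphi'(z_0)\neq 0$, and then $z_0\notin E$ forces $A_\varphi(z_0)=0$, giving $\mu(\varphi)=0$ directly.
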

	\noindent\textbf{Remark.} This corollary shows  that, in the hyperbolic setting, the lower order does not behave as in the Euclidean and spherical frames (see \cite{CrPo07} and \cite{AAM17} for more details).\\\\
	The following theorem gives conditions under which the level sets are connected (see also Example 1, Section 4).  Let us write
	\[
	c:=\inf_{z\in \mathbb{D}}|D_{\varphi}(z)|,\;\;\;\; d:=\sup_{z\in \mathbb{D}}|D_{\varphi}(z)|
	\]
	and 
	\[
	B(t)=\{z\in\mathbb{D}:|D_\varphi(z)|\geq t\}=\bigcup_{s\geq t}C_\varphi(s)\;\;\text{ for all}\;\; t\in (c,d).
	\]
	\begin{The}\label{th2}
		Suppose that $\varphi$ is locally univalent and $A_{\varphi}(z)\neq 0$ for all $z\in \mathbb{D}$. If $\omega^{-}=c$ and $\omega^{+}=d$ for all trajectories, then $C_{\varphi}(t)$ is an open Jordan arc for all $t\in (c,d)$. Furthermore, $B:=\bigcap_{t}\overline{B(t)}$ is a compact connected subset of $\mathbb{T}$ such that any trajectory ends on $B$.
	\end{The}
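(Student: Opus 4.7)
The plan is to view the trajectories as a foliation of $\mathbb{D}$ transverse to the level sets, and then to propagate connectedness from the level sets upwards to $B$. Under the hypotheses, $\varphi'$ and $A_\varphi$ vanish nowhere, so $E=\mathbb{D}$, and the formula for $\nabla|D_\varphi|$ from Section~\ref{sec2} shows that $|D_\varphi|$ has no critical point in $\mathbb{D}$; hence each $C_\varphi(t)$ is a smooth embedded $1$-submanifold. The assumption $\omega^{-}=c$, $\omega^{+}=d$ means that through every $z_0\in\mathbb{D}$ passes a unique maximal trajectory $z:(c,d)\to\mathbb{D}$ parametrized so that $|D_\varphi(z(s))|=s$. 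Uniqueness for the ODE \eqref{eq2.3} forces two trajectories to either coincide or be disjoint, so the trajectories partition $\mathbb{D}$, and each one meets $C_\varphi(t)$ precisely at the single point $z(t)$.

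To prove that $C_\varphi(t)$ is connected, I would decompose it into its connected components $\{L_\alpha\}$ and, for each $\alpha$, let $U_\alpha$ be the union of the trajectories meeting $L_\alpha$. The sets $U_\alpha$ are pairwise disjoint (a trajectory meets $C_\varphi(t)$ only once), their union covers $\mathbb{D}$ (every point lies on a trajectory that passes through $C_\varphi(t)$), and continuous dependence on initial data shows that each $U_\alpha$ is open. Connectedness of $\mathbb{D}$ then forces a single component; combined with the smooth $1$-manifold structure, $C_\varphi(t)$ is an open Jordan arc.

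For the second assertion I would write $B(t)=\bigcup_{z\in C_\varphi(t)}\{z(s):t\le s<d\}$, where each forward trajectory arc is connected and meets the connected set $C_\varphi(t)$ at $z$; hence $B(t)$, and therefore its closure $\overline{B(t)}$ in $\overline{\mathbb{D}}$, is compact and connected. The family $\{\overline{B(t)}\}_{t\in(c,d)}$ is nested decreasing, so $B=\bigcap_t \overline{B(t)}$ is a compact connected subset of $\overline{\mathbb{D}}$. To see $B\subset\mathbb{T}$, note that an interior point $z_0\in B\cap\mathbb{D}$ would satisfy $|D_\varphi(z_0)|=d$, giving an interior maximum of $|D_\varphi|$; the formula for $\nabla|D_\varphi|$ together with $\varphi'(z_0)\neq 0$ would then force $A_\varphi(z_0)=0$, a contradiction. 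Finally, any accumulation point $\zeta$ of a trajectory $z(s)$ as $s\to d$ lies in $\mathbb{T}$ (by the same maximum-principle argument, since $\overline{\Gamma}\setminus\Gamma\subset\partial E\cup\mathbb{T}=\mathbb{T}$) and lies in $\overline{B(t)}$ for every $t\in(c,d)$, hence in $B$.

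The main obstacle is the connectedness of $C_\varphi(t)$: the foliation-by-trajectories argument requires all three hypotheses simultaneously---local univalence and $A_\varphi\neq 0$ (so $E=\mathbb{D}$ and trajectories are unique and transverse to the level sets), together with $\omega^{\pm}=c,d$ (so every trajectory crosses every intermediate level set). Once $C_\varphi(t)$ is known to be connected, the remaining parts follow from nested-intersection and maximum-principle arguments that I expect to be essentially routine.
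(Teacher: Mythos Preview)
Your proposal is essentially correct and follows the paper's overall structure, but the key step---connectedness of $C_\varphi(t)$---is argued differently. The paper fixes two points $w_1,w_2\in C_\varphi(\tau)$ and, for each $w$ on the segment $[w_1,w_2]$, flows $w$ along its trajectory to level $\tau$; continuous dependence on the initial point (after a time translation so that all solutions start at $t=0$) makes this a continuous map $h:[w_1,w_2]\to C_\varphi(\tau)$ with $h(w_j)=w_j$, giving path-connectedness directly. You instead define the continuous retraction $F:\mathbb{D}\to C_\varphi(t)$, $F(z)=z(t)$, and observe that the preimages $U_\alpha=F^{-1}(L_\alpha)$ of the components form an open partition of $\mathbb{D}$, forcing a single component. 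Both arguments rest on the same ingredient (the hypothesis $\omega^\pm=c,d$ makes the flow-to-level-$t$ map globally defined and continuous), but yours packages it as a clopen argument rather than a path construction; the paper's version is more explicit, while yours makes the role of the foliation more transparent and incidentally gives, for free, that $F$ is a deformation retraction.

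One small gap: after establishing that $C_\varphi(t)$ is a connected smooth $1$-manifold, you conclude it is an open Jordan arc without excluding the circle. The paper handles this up front by noting that a Jordan curve component would be crossed twice by some trajectory (equivalently, a closed level curve would bound a disk on which $|D_\varphi|$ attains an interior extremum, contradicting $A_\varphi\neq 0$). Your retraction $F$ actually gives an alternative: if $C_\varphi(t)$ were a Jordan curve bounding a disk $R\subset\mathbb{D}$, then $F|_{\overline R}$ would retract $\overline R$ onto its boundary, which is impossible. Either way, one sentence is needed here. The remaining parts---connectedness of $B(t)$ via forward arcs meeting $C_\varphi(t)$, the nested intersection, $B\subset\mathbb{T}$ by absence of interior maxima, and $\overline{\Gamma^+}\setminus\Gamma^+\subset B$---match the paper's reasoning.
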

	\begin{proof}
	Our reasoning follows the argument given by Pommerenke in \cite{Po08}, Theorem 2.1 (see also \cite{AAM17}, Theorem 2). Fix $c<\tau<d$ and write $C(\tau):=C_{\varphi}(\tau)$. Since 
	\[
	\nabla |D_{\varphi}(z)|=2\frac{|\varphi '(z)|\overline{A_{\varphi}(z)}}{1-|\varphi(z)|^2}\neq 0,
	\]
	then $C(\tau)$ is a union of Jordan arcs. Furthermore, since a trajectory cannot cross $C(\tau)$ at two different points, then no component $C$ of $C(\tau)$ is a Jordan curve inside $\mathbb{D}$, so that $\overline{C}\setminus C\subset \mathbb{T}$.\\
	Now, we prove that $C(\tau)$ is connected. Let $w_1,w_2\in C(\tau)$; fix an arbitrary point $w$ in the segment $[w_1,w_2]$. The solution of the initial value problem
	\begin{align*}
	z^{\prime}(t)=\dfrac{1-|z(t)|^2}{2(t+|D_\varphi(w)|)A_\varphi(z(t))},\;\;z(0)=w%
	\end{align*}
	is $\gamma_w(t)=\beta_w(t+|D_\varphi(w)|)$, where $\beta_w$ is the trajectory through $w$ given by \eqref{eq2.3}. The continuous dependence of $\gamma_w(t)$ with respect to $w$ implies that the function $h(w):=\gamma_w(\tau-|D_\varphi(w)|)$ is continuous on the segment $[w_1,w_2]$. Since $h(w)=\beta_w(\tau)$, by \eqref{eq2.2}, $h(w)\in C(\tau)$; therefore $h([w_1,w_2])$ is a connected subset of $C(\tau)$ that joins $w_1=h(w_1)$ and $w_2=h(w_2)$.\\
	To prove that the set $B$ is connected, fix $t\in (c,d)$ and take a trajectory $\Gamma$ parameterized by $z(s)$. Then $\Gamma^+:=\{z(s):s\geq t\}$ is a connected subset of $B(t)$ which meets $C_\varphi(s)$ for each $s\in[t,d)$. Hence $B(t)$ is connected and therefore $B$ is an intersection of nested compact connected sets, clearly contained in $\mathbb{T}$.\\
	Finally, let us show that $\overline{\Gamma^+}\setminus \Gamma^+\subset B$. Take $z_0\in\overline{\Gamma^+}\setminus \Gamma^+$ and $s\in (c,d)$. We may assume the existence of a sequence $t_n \in [t,d)$ converging to $t_0$ such that $z_n:=z(t_n)\to z_0$. Hence $t_0=d$; otherwise, $z_0=z(t_0)$ would be in $\Gamma^+$. Then, for sufficiently large $n$, $s<t_n$ and therefore $z_n\in B(s)$ and so $z_0\in\overline{B(s)}$.
	\end{proof}
	\textit{Conjecture}. The compact connected set $B$ is a single point.\\\\
	\textbf{Remark.} The corresponding result to the previous theorem in the Euclidean and spherical frames (see \cite{Po08}, \cite{AAM17}) also holds, weakening the hypothesis that the lower order is positive. We would like to point out that in \cite{Po08} Pommerenke shows that the set $B$ is a single point. However, he offers no proof to discard the possibility of a wiggling behavior of the trajectory near $\mathbb{T}$.  

	\section{Critical points of $|D_\varphi|$}\label{sec3}
	The \textit{critical points} of $|D_{\varphi}(z)|$ are the discrete zeros of $\varphi '$ and the not necessarily discrete zeros of $A_{\varphi}(z)$, as we can see from \eqref{eq1.2}. Indeed, $A_\varphi$ could be identically equal to zero as is shown in the next proposition.
	\begin{Pro}\label{prop1}
		$A_{\varphi}\equiv 0$ in a neighborhood of a point if and only if $\varphi \in \textup{M\"ob}(\mathbb{D})$.	
	\end{Pro}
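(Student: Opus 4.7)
The plan is to prove the two implications separately. For the easy direction, if $\varphi \in \textup{M\"ob}(\mathbb{D})$ then the equality case of the Schwarz--Pick lemma gives $|D_\varphi(z)| \equiv 1$ on $\mathbb{D}$, hence $\log|D_\varphi| \equiv 0$, and the defining formula \eqref{eq1.2} yields $A_\varphi \equiv 0$ everywhere.

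For the converse, assume $A_\varphi \equiv 0$ on a neighborhood $U$ of some point $z_0 \in \mathbb{D}$. Since the explicit expression for $A_\varphi$ in \eqref{eq1.2} has a pole at any zero of $\varphi'$ (via the $\varphi''/\varphi'$ term), shrinking $U$ if necessary I may assume that $\varphi'$ does not vanish on $U$. The first line of \eqref{eq1.2}, $A_\varphi = (1-|z|^2)\partial_z \log|D_\varphi|$, then gives $\partial_z \log|D_\varphi| \equiv 0$ on $U$; since $\log|D_\varphi|$ is real-valued, the vanishing of $\partial_z$ is equivalent to the vanishing of both real partial derivatives, so $|D_\varphi|$ is constant on $U$. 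The core of the argument is now to show that this constant must equal $1$, which I expect to be the main obstacle.

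The strategy for closing that gap is a direct $\partial_z\partial_{\bar z}$ computation. Writing
\[ \log|D_\varphi(z)|^2 = 2\log(1-|z|^2) + \log|\varphi'(z)|^2 - 2\log(1-|\varphi(z)|^2), \]
applying $\partial_z \partial_{\bar z}$ and using the standard identities $\partial_z \partial_{\bar z}\log(1-|z|^2) = -(1-|z|^2)^{-2}$, $\partial_z \partial_{\bar z}\log|\varphi'(z)|^2 = 0$ where $\varphi' \neq 0$, and the chain-rule version $\partial_z \partial_{\bar z}\log(1-|\varphi(z)|^2) = -|\varphi'(z)|^2(1-|\varphi(z)|^2)^{-2}$, everything collapses to
\[ \partial_z \partial_{\bar z}\log|D_\varphi(z)|^2 = \frac{|D_\varphi(z)|^2 - 1}{(1-|z|^2)^2}. \]
Because $\log|D_\varphi|^2$ is locally constant on $U$, the left side vanishes there, forcing $|D_\varphi| \equiv 1$ on $U$. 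Finally, the rigidity part of the Schwarz--Pick lemma (equality of the hyperbolic derivative at any single interior point characterizes Möbius automorphisms of $\mathbb{D}$) yields $\varphi \in \textup{M\"ob}(\mathbb{D})$, completing the proof.
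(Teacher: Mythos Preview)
Your proof is correct and follows essentially the same route as the paper's: both arguments differentiate the condition $A_\varphi=0$ in the $\bar z$-direction to obtain $|D_\varphi|^2=1$ and then invoke the equality case of Schwarz--Pick. The paper isolates the holomorphic term $\varphi''/\varphi'$ from \eqref{eq1.2} and applies $\partial_{\bar z}$, whereas you package the same computation as the Laplacian identity $\partial_z\partial_{\bar z}\log|D_\varphi|=-(1-|D_\varphi|^2)/(1-|z|^2)^2$, which is precisely Proposition~\ref{prop2} of the paper (your displayed formula for $\log|D_\varphi|^2$ is off by a harmless factor of $2$).
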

	\begin{proof}
		Suppose that $A_\varphi(z)=0$ for $z$ in an open disk $\Delta\subset \mathbb{D}$. By \eqref{eq1.2},
		\[
		\frac{\varphi''(z)}{\varphi'(z)}=\frac{2\bar{z}}{1-|z|^2}-\frac{2\varphi'(z)\overline{\varphi(z)}}{1-|\varphi(z)|^2},\;\;\;z\in \Delta.
		\]
		Then, taken partial derivatives with respect to $\bar{z}$, we have
		\[
		0=\frac{1}{(1-|z|^2)^2}-\frac{|\varphi'(z)|^2}{(1-|\varphi(z)|^2)^2}.
		\]
		Now, it follows from Schwarz-Pick's lemma that $\varphi\in \operatorname{M\ddot{o}b}(\mathbb{D})$.\\\\
		Conversely, if $\varphi\in \operatorname{M\ddot{o}b}(\mathbb{D})$, then a straightforward computation shows that $A_\varphi \equiv 0$ in $\mathbb{D}$. 
		
	\end{proof}
	\noindent The function $1/|D_{\varphi}|$ is subharmonic in $\{z\in\mathbb{D}:\;\varphi'(z)\neq0\}$ and so, the local minima of $|D_{\varphi}|$ occur at points $z$ where $\varphi'(z)=0$. This is a consequence of the following proposition.
	\begin{Pro}\label{prop2}
		Let $\varphi:\mathbb{D} \to \mathbb{D}$ be analytic and not constant. If $\varphi '(z_0)\neq 0$, then
		\begin{equation*}
		\dfrac{\partial^2\log |D_{\varphi}|}{\partial z \partial \bar{z}}(z_0)=-\dfrac{1-|D_{\varphi}(z_0)|^2}{(1-|z_0|^2)^2}.
		\end{equation*}
	\end{Pro}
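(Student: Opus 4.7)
The strategy is a direct logarithmic differentiation. Since $\varphi'(z_0)\neq 0$, $\varphi'$ has no zeros in a neighborhood of $z_0$ and so $\log|\varphi'|$ is a well-defined smooth real-valued function there. Decompose
\[
\log|D_\varphi(z)| \;=\; \log(1-|z|^2) \;+\; \log|\varphi'(z)| \;-\; \log(1-|\varphi(z)|^2).
\]
I would then compute $\partial^2/\partial z\,\partial\bar z$ termwise.

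For the middle term, locally $\log|\varphi'(z)|=\tfrac12\log\varphi'(z)+\tfrac12\log\overline{\varphi'(z)}$ splits into a holomorphic and an antiholomorphic piece, so $\partial_{\bar z}\partial_z\log|\varphi'|\equiv 0$ in this neighborhood. For the first term, a routine computation (using $\partial_{\bar z}(1-|z|^2)=-z$ and the quotient rule) yields
\[
\frac{\partial^2}{\partial z\,\partial\bar z}\log(1-|z|^2) \;=\; -\frac{1}{(1-|z|^2)^2}.
\]
For the third term, with $w=\varphi(z)$ holomorphic, the same kind of calculation — first $\partial_{\bar z}$ (noting $\partial_{\bar z}\varphi=0$, $\partial_{\bar z}\overline{\varphi}=\overline{\varphi'}$), then $\partial_z$ — gives, after combining the two fractions over $(1-|\varphi|^2)^2$,
\[
\frac{\partial^2}{\partial z\,\partial\bar z}\bigl[-\log(1-|\varphi(z)|^2)\bigr] \;=\; \frac{|\varphi'(z)|^2}{(1-|\varphi(z)|^2)^2}.
\]

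Finally, I would add these three contributions and rewrite the result using the identity
\[
\frac{|\varphi'(z)|^2}{(1-|\varphi(z)|^2)^2} \;=\; \frac{|D_\varphi(z)|^2}{(1-|z|^2)^2},
\]
which is immediate from the definition of $D_\varphi$. This produces
\[
\frac{\partial^2\log|D_\varphi|}{\partial z\,\partial\bar z}(z_0) \;=\; -\frac{1}{(1-|z_0|^2)^2}+\frac{|D_\varphi(z_0)|^2}{(1-|z_0|^2)^2} \;=\; -\frac{1-|D_\varphi(z_0)|^2}{(1-|z_0|^2)^2},
\]
as claimed. There is no real obstacle here: the whole argument is a careful Wirtinger calculation, and the only mildly subtle point is noticing that the $\log|\varphi'|$ term contributes nothing because $\varphi'$ is locally non-vanishing and holomorphic.
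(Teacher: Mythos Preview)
Your proof is correct. Both arguments are direct Wirtinger computations, but the organization differs. The paper first writes $\partial_z\log|D_\varphi|=A_\varphi/(1-|z|^2)$ using the explicit three-term formula \eqref{eq1.2} for $A_\varphi$, and then differentiates that quotient with respect to $\bar z$; the cancellation of the $A_\varphi$-terms happens at the end. You instead split $\log|D_\varphi|$ into $\log(1-|z|^2)+\log|\varphi'|-\log(1-|\varphi|^2)$ and treat each summand separately, which lets you dispose of the middle term immediately by harmonicity and recognise the other two as (pullbacks of) the standard curvature computation for the Poincar\'e metric. Your route is slightly more conceptual and self-contained; the paper's route is natural in context because $A_\varphi$ has already been introduced and will be used repeatedly afterwards.
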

	\begin{proof}
		From \eqref{eq1.2}
		\begin{align*}
		\frac{\partial ^2}{\partial \bar{z}\partial z}\log |D_{\varphi} (z)| & = \frac{\partial }{\partial \bar{z}}\Big [\frac{A_{\varphi}(z)}{1-|z|^2}\Big ] \\
		& = \frac{1}{(1-|z|^2)^2}\bigg [(1-|z|^2)\Big [-\frac{1}{2}z\frac{\varphi''(z)}{\varphi '(z)}-1+\frac{(1-|z|^2)|\varphi '(z)|^2}{(1-|\varphi(z)|^2)^2}\\
		&\hspace{2.3cm}-\frac{z\overline{\varphi(z)}\varphi '(z)}{1-|\varphi(z)|^2} \Big]+zA_{\varphi}(z)\bigg]\\
		& = \frac{1}{(1-|z|^2)^2}\bigg [-zA_{\varphi}(z)-1+\frac{(1-|z|^2)^2|\varphi '(z)|^2}{(1-|\varphi(z)|^2)^2}+zA_{\varphi}(z)\bigg ]\\
		& = -\dfrac{1}{(1-|z|^2)^2}\big [1-|D_{\varphi}(z)|^2\big ].
		\end{align*}
	\end{proof}
	Since the trajectories $\Gamma$ may go to these points, then it is relevant to investigate the behavior of the hyperbolic derivative near the critical points. Theorem \ref{th3} gives a partial answer. In order to prove it  we will expand $|D_\varphi(z)|$ around $z_0\in E$.\\
	For $z\to z_0$ we have
	\begin{align*}
	|D_{\varphi}(z)|&=|D_{\varphi}(z_0)|+\frac{\partial|D_{\varphi}(z_0)|}{\partial z}(z-z_0)+\frac{\partial|D_{\varphi}(z_0)|}{\partial \bar{z}}(\bar{z}-\bar{z_0})\\ 
	&+\frac{1}{2}\frac{\partial^2|D_{\varphi}(z_0)|}{\partial z^2}(z-z_0)^2+\frac{1}{2}\frac{\partial^2|D_{\varphi}(z_0)|}{\partial \bar{z}^2}(\bar{z}-\bar{z_0})^2\\ 
	&+\frac{\partial^2|D_{\varphi}(z_0)|}{\partial z \partial \bar{z}}|z-z_0|^2+O(|z-z_0|^3).
	\end{align*}
	From \eqref{eq1.2}
	\begin{equation*}
	\frac{\partial|D_{\varphi}|}{\partial\bar{z}}=\overline{\frac{\partial|D_{\varphi}|}{\partial z}}=\frac{|D_{\varphi}|\overline{A_{\varphi}}}{1-|z|^2};
	\end{equation*}
	also
	\begin{equation*}
	\frac{\partial^2|D_{\varphi}|}{\partial \bar{z}^2}=\frac{\partial}{\partial\bar{z}}\Big (\overline{\frac{\partial|D_{\varphi}|}{\partial z}}\Big )=\overline{\frac{\partial}{\partial z}\Big (\frac{|D_{\varphi}|}{\partial z}\Big )}=\overline{\frac{\partial^2|D_{\varphi}|}{\partial z^2}},
	\end{equation*}
	then we obtain
	\begin{align}\label{eq3.1}
	|D_{\varphi}(z)|&=|D_{\varphi}(z_0)|+2\frac{|D_{\varphi}(z_0)|}{1-|z_0|^2}\operatorname{Re}\big (A_{\varphi}(z_0)(z-z_0)\big )\notag\\ 
	&+\operatorname{Re} \Big (\frac{\partial^2|D_{\varphi}(z_0)|}{\partial z^2}(z-z_0)^2 \Big )+\frac{\partial^2|D_{\varphi}(z_0)|}{\partial z \partial \bar{z}}|z-z_0|^2+O(|z-z_0|^3).
	\end{align}
	On the other hand, from \eqref{eq1.2},
	\begin{align*}
	\frac{\partial A_{\varphi}}{\partial z} & = \frac{1-|z|^2}{2}\frac{\varphi '''}{\varphi '}-\frac{1-|z|^2}{2}\Big (\frac{\varphi ''}{\varphi '} \Big )^2 - \frac{\bar{z}}{2}\frac{\varphi ''}{\varphi '}-\frac{\bar{z}\bar{\varphi}\varphi '}{1-|\varphi|^2}\\ 
	&+\frac{(1-|z|^2)\bar{\varphi} \varphi ''}{1-|\varphi|^2}+\frac{(1-|z|^2)(\bar{\varphi} \varphi ')^2}{(1-|\varphi|^2)^2}\\
	&=\frac{(1-|z|^2)S_\varphi}{2}+\frac{1-|z|^2}{4}\Big(\frac{\varphi ''}{\varphi '} \Big)^2- \frac{\bar{z}}{2}\frac{\varphi ''}{\varphi '}-\frac{\bar{z}\bar{\varphi}\varphi '}{1-|\varphi|^2}\\ 
	&+\frac{(1-|z|^2)\bar{\varphi} \varphi ''}{1-|\varphi|^2}+\frac{(1-|z|^2)(\bar {\varphi} \varphi ')^2}{(1-|\varphi|^2)^2}.
	\end{align*}
	Similarly,
	\begin{align*}
	\frac{\partial A_{\varphi}}{\partial \bar{z}} & =-\frac{z}{2}\frac{\varphi ''}{\varphi '} -1 - \frac{z\bar{\varphi}\varphi'}{1-|\varphi|^2} + \frac{(1-|z|^2)|\varphi '|^2}{(1-|\varphi|^2)^2}.
	\end{align*}
	Therefore
	\begin{equation}\label{eq3.2}
	\begin{cases}
	(1-|z|^2)\dfrac{\partial A_{\varphi}}{\partial z}&=\dfrac{(1-|z|^2)^2 S_\varphi}{2}+\big (A_{\varphi} \big )^2+\bar{z}A_{\varphi}\;,\\\\
	(1-|z|^2)\dfrac{\partial A_{\varphi}}{\partial \bar{z}}&=-zA_{\varphi} + |D_{\varphi}|^2-1.
	\end{cases}
	\end{equation}
	Since
	\begin{equation*}
	\frac{\partial^2|D_{\varphi}|}{\partial z^2}=\frac{\partial}{\partial z}\Big ( \frac{|D_{\varphi}|A_{\varphi}}{1-|z|^2}\Big)\;\;\;\text{and}\;\;\;
	\frac{\partial^2|D_{\varphi}|}{\partial z\partial \bar{z}}=\frac{\partial}{\partial z}\Big ( \frac{|D_{\varphi}|\overline{A_{\varphi}}}{1-|z|^2}\Big),
	\end{equation*}
	it follows from \eqref{eq3.2} that
	\begin{equation}\label{eq3.3}
	\begin{cases}
	\dfrac{\partial^2|D_{\varphi}|}{\partial z^2}&=\dfrac{|D_{\varphi}|}{(1-|z|^2)^2}\Big ( \dfrac{(1-|z|^2)^2 S_\varphi}{2}+2(A_{\varphi})^2 + 2\bar{z}A_{\varphi}\Big),\\
	\dfrac{\partial^2|D_{\varphi}|}{\partial z \partial \bar{z}}&=\dfrac{|D_{\varphi}|}{(1-|z|^2)^2}\big( -1+|D_{\varphi}|^2 + |A_{\varphi}|^2\big).
	\end{cases}
	\end{equation}
	We finally obtain from \eqref{eq3.1}  and \eqref{eq3.3} the expansion
	\begin{align}\label{eq3.4}
	|D_{\varphi}(z)|&=|D_{\varphi}(z_0)|+\frac{2|D_{\varphi}(z_0)|}{1-|z_0|^2}\operatorname{Re}\big (A_{\varphi}(z_0)(z-z_0)\big )\notag\\
	&+\frac{|D_{\varphi}(z_0)|}{(1-|z_0|^2)^2}\operatorname{Re}\Big [\Big ( \frac{(1-|z_0|^2)^2 S_\varphi(z_0)}{2}+2(A_{\varphi}(z_0))^2 + 2\bar{z_0}A_{\varphi}(z_0)\Big)(z-z_0)^2\Big ]\notag\\
	&+\frac{|D_{\varphi}(z_0)|}{(1-|z_0|^2)^2}\big( -1+|D_{\varphi}(z_0)|^2 + |A_{\varphi}(z_0)|^2\big)|z-z_0|^2+O(|z-z_0|^3).
	\end{align}
	\begin{The}\label{th3}
		Let $\varphi:\mathbb{D}\rightarrow\mathbb{D}$ be analytic and not constant, $S_{\varphi}$ the Schwarzian derivative of $\varphi$, and $z_0$ a critical point of $|D_{\varphi}(z)|$ with $\varphi'(z_0)\neq 0$.
		\begin{enumerate}
			\item[(i)] If $|D_{\varphi}(z)|$ has a local maximum at $z_{0}$, then
			\begin{equation}\label{eq3.5}
			(1-\left\vert z_{0}\right\vert^{2})^2  \left\vert S_{\varphi}(  z_{0})  \right\vert \leq 2(1-|D_{\varphi}(z_0)|^2).
			\end{equation}
			\item[(ii)] If strict inequality holds in \eqref{eq3.5}, then $|D_{\varphi}(z)|$ has a strict local maximum at $z_0$; in particular, $z_0$ is an isolated critical point.
			\item[(iii)] If $(1-\left\vert z_{0}\right\vert^{2})^2  \left\vert S_{\varphi}(  z_{0})  \right\vert > 2(1-|D_{\varphi}(z_0)|^2)$, then $z_0$ is an isolated critical point and there exists a neighborhood $U_0$ of $z_0$ such that $U_0 \cap C_\varphi(t_0)$ consists of two arcs that intersect at $z_0$ under a positive angle, where $t_0=|D_{\varphi}(z_0)|$.
		\end{enumerate}
	\end{The}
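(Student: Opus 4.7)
The fundamental observation is that $\varphi'(z_0)\neq 0$ together with $z_0$ being a critical point of $|D_\varphi|$ forces $A_\varphi(z_0)=0$, in view of the identity $\partial|D_\varphi|/\partial z = |D_\varphi|\overline{A_\varphi}/(1-|z|^2)$ established in Section~\ref{sec2}. Substituting $A_\varphi(z_0)=0$ into the expansion \eqref{eq3.4} and abbreviating $t_0=|D_\varphi(z_0)|$, $\rho_0=1-|z_0|^2$, $S_0=S_\varphi(z_0)$, $w=z-z_0$, one reads off
\begin{equation*}
|D_\varphi(z_0+w)|-t_0=\frac{t_0}{\rho_0^2}\left[\frac{\rho_0^2}{2}\operatorname{Re}(S_0 w^2)-(1-t_0^2)|w|^2\right]+O(|w|^3),
\end{equation*}
and the whole theorem reduces to analyzing the bracketed quadratic form $Q(w)$.

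Writing $w=re^{i\theta}$, the bracket becomes $r^2\left[\tfrac{\rho_0^2}{2}|S_0|\cos(\arg S_0+2\theta)-(1-t_0^2)\right]$. For (i), if $z_0$ is a local maximum this must be $\leq 0$ for every $\theta$; choosing $\theta$ so that the cosine equals $+1$ yields precisely \eqref{eq3.5}. For (ii), under strict inequality the bracket is bounded above by a strictly negative constant times $r^2$ uniformly in $\theta$, which dominates the cubic remainder in a small punctured disk around $z_0$ and gives a strict local maximum. Isolation of $z_0$ then follows because the real Hessian of $|D_\varphi|$ at $z_0$ is negative definite, with trace $-4t_0(1-t_0^2)/\rho_0^2<0$ and determinant $(t_0/\rho_0^2)^2[4(1-t_0^2)^2-\rho_0^4|S_0|^2]>0$, so $\nabla|D_\varphi|$ is a local diffeomorphism by the inverse function theorem.

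For (iii) the same Hessian determinant is now strictly negative, so $z_0$ is a nondegenerate saddle of $|D_\varphi|$; the inverse function theorem again makes $z_0$ an isolated critical point. An indefinite nondegenerate real quadratic form in two variables factors as a product $\ell_1\ell_2$ of two real-linearly independent linear forms, so the cone $\{Q=0\}$ is the union of two transverse lines through the origin. Morse's lemma, applied to the smooth function $|D_\varphi|-t_0$ at its nondegenerate critical point $z_0$, produces a local smooth change of coordinates under which $|D_\varphi|-t_0$ is carried to its quadratic model $Q$; hence $C_\varphi(t_0)$ near $z_0$ is the diffeomorphic image of $\{\ell_1\ell_2=0\}$, namely two smooth arcs crossing at $z_0$ at a positive angle. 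The principal obstacle I expect is this last passage from the quadratic model to the actual level set: it either requires invoking Morse's lemma directly, or an explicit factorization argument in which one writes $|D_\varphi|-t_0$ in coordinates diagonalizing $Q$ and applies the implicit function theorem to each of the two smooth factors $\ell_j+O(|w|)$.
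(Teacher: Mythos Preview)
Your proposal is correct, and for parts (i) and the strict-maximum half of (ii) it is essentially identical to the paper's argument: both substitute $A_\varphi(z_0)=0$ into the Taylor expansion \eqref{eq3.4} and read off the sign of the quadratic term in the angular variable. (Minor slip: the identity from \eqref{eq1.2} is $\partial_z|D_\varphi|=|D_\varphi|A_\varphi/(1-|z|^2)$, without the conjugate; this does not affect the conclusion $A_\varphi(z_0)=0$.)

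Where you diverge is in the treatment of isolation in (ii) and of the whole of (iii). You compute the real Hessian of $|D_\varphi|$ at $z_0$, observe that its determinant equals $(t_0/\rho_0^2)^2\bigl[4(1-t_0^2)^2-\rho_0^4|S_0|^2\bigr]$, and then invoke the inverse function theorem on $\nabla|D_\varphi|$ for isolation and Morse's lemma for the local normal form of the level set in the saddle case. The paper instead argues more directly from the expansions: for isolation in (iii) it expands $A_\varphi(z)$ itself around $z_0$ using \eqref{eq3.2} and bounds $|A_\varphi(z)|$ from below by a positive multiple of $|z-z_0|$; for the level-set structure it passes to polar coordinates $z-z_0=\rho e^{i\vartheta}$, defines a smooth function $h(\rho,\vartheta)$ whose zero set for $\rho>0$ is exactly $C_\varphi(t_0)$, checks that $h(0,\vartheta)=\cos(\gamma+2\vartheta)-b/a$ vanishes at four directions with $\partial h/\partial\vartheta\neq 0$ there, and concludes via the implicit function theorem. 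Your Morse-lemma route is cleaner and more conceptual, and it dispatches the ``principal obstacle'' you flag in one stroke; the paper's route is more hands-on and avoids quoting Morse's lemma, at the cost of a slightly ad hoc polar-coordinate argument. Both are valid and yield the same conclusion.
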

	\begin{proof}
		By hypothesis $A_{\varphi}(z_0)=0$. It follows from \eqref{eq2.5} that
		\begin{align}\label{eq3.6}
		\frac{|D_{\varphi}(z)|}{|D_{\varphi}(z_0)|}-1=\frac{|z-z_0|^2}{(1-|z_0|^2)^2}&\Big [\operatorname{Re}\Big ( \frac{(1-|z_0|^2)^2S_{\varphi}(z_0)}{2}\frac{(z-z_0)^2}{|z-z_0|^2}\Big )\notag \\ &-1+|D_{\varphi}(z_0)|^2+O(|z-z_0|)  \Big ]
		\end{align}
		as $z\to z_0$. If  $|D_{\varphi}(z)|$ has a local maximum at $z_0$, then the term in \eqref{eq3.6} between the square brackets is $\leq 0$ near $z_0$. Hence,
		\[
		\frac{(1-|z_0|^2)^2|S_{\varphi}(z_0)|}{2}-1+|D_{\varphi}(z_0)|^2 \leq 0.
		\]
		This proves $(i)$.\\
		Since
		\begin{align*}
		&\operatorname{Re}\Big ( \frac{(1-|z_0|^2)^2S_{\varphi}(z_0)}{2}\frac{(z-z_0)^2}{|z-z_0|^2}\Big ) -1+|D_{\varphi}(z_0)|^2+O(|z-z_0|) \\
		&\leq \Big |\frac{(1-|z_0|^2)^2S_{\varphi}(z_0)}{2}\Big | -1+|D_{\varphi}(z_0)|^2+O(|z-z_0|)
		\end{align*}
		as $z\to z_0$, we can deduce now $(ii)$ from \eqref{eq3.6} and the assumption that
		\[
		|D_{\varphi}(z)|<|D_{\varphi}(z_0)|
		\]
		in a puncture neighborhood of $z_0$.\\
		Now we prove $(iii)$. From \eqref{eq3.2}, the expansion of $A_{\varphi}(z)$ around the critical point  $z_0$ gives
		\begin{align*}
		A_{\varphi}(z)&= \frac{\partial A_{\varphi}(z_0)}{\partial z}(z-z_0)+\frac{\partial A_{\varphi}(z-0)}{\partial \bar{z}}(\bar{z}-\bar{z_0})+O(|z-z_0|^2)\\
		&=\frac{1}{1-|z_0|^2}\Big[\frac{(1-|z_0|^2)^2 S_\varphi(z_0)}{2}(z-z_0)-(1-|D_{\varphi}(z_0)|^2)(\bar{z}-\bar{z_0})\Big]+O(|z-z_0|^2)\\
		&=\frac{|z-z_0|}{1-|z_0|^2}\Big[\frac{(1-|z_0|^2)^2 S_\varphi(z_0)}{2}\frac{(z-z_0)}{|z-z_0|}-(1-|D_{\varphi}(z_0)|^2)\frac{(\bar{z}-\bar{z_0})}{|z-z_0|}\Big]+O(|z-z_0|^2)
		\end{align*}
		as $z\to z_0$. Therefore
		\begin{equation*}
		\frac{(1-|z_0|^2)|A_{\varphi}(z)|}{|z-z_0|}\geq \frac{(1-|z_0|^2)^2 |S_\varphi(z_0)|}{2}-(1-|D_{\varphi}(z_0)|^2)+O(|z-z_0|)
		\end{equation*}
		in a puncture neighborhood of $z_0$. This inequality and the hypothesis show that $z_0$ is an isolated critical point.\\
		Finally, for simplicity we write $a:=\dfrac{(1-|z_0|^2)^2|S_\varphi(z_0)|}{2}$, $b:=1-|D_{\varphi}(z_0)|^2$, and $z-z_0=\rho e^{i\vartheta}$. From \eqref{eq3.6} we have
		\[
		\frac{(1-|z|^2)^2}{a\rho^2}\frac{|D_{\varphi}(z)|-|D_{\varphi}(z_0)|}{|D_{\varphi}(z_0)|}=\cos(\gamma+2\vartheta)-\frac{b}{a}+O(\rho)
		\]
		as $\rho \to 0$, where $\gamma=\operatorname{arg}S_\varphi(z_0)$.
		Define
		\begin{equation*}
		h(\rho,\vartheta):=
		\begin{cases}
		\dfrac{(1-|z|^2)^2}{a\rho^2}\dfrac{|D_{\varphi}(z)|-|D_{\varphi}(z_0)|}{|D_{\varphi}(z_0)|}&\;\; \text{if}\;\;\rho>0,\\
		\cos(\gamma+2\vartheta)-\frac{b}{a}&\;\;\text{if}\;\;\rho=0. 
		\end{cases}
		\end{equation*}
		The function $h$ is smooth. Also, for $\rho >0$, $z\in C_\varphi(t_0)$ if and only if $h(\rho, \vartheta)=0$. The $0$ level set of $h$ consists of isolated points or Jordan arcs. By assumption $0<b/a<1$. Therefore $h(0,\vartheta)=0$ when $\cos (\gamma+2\vartheta)=b/a$, which occurs at four different directions $\vartheta_1$, $\vartheta_2$, $\vartheta_3=\vartheta_1+\pi$, $\vartheta_4=\vartheta_2+\pi$. Then $\dfrac{\partial h}{\partial \vartheta}(0,\vartheta_j)=-2\sin(\gamma + 2\vartheta_j)\neq 0$. Hence, $\nabla h(0,\vartheta_j)\neq 0$ which gives that the $0$ level set of $h$ near $(0,\vartheta_j)$ is a single arc: otherwise the changes of sign in $h$ and the continuity of $\nabla h$ would imply $\nabla h(0,\vartheta_j)=0$. This finishes the proof.	
	\end{proof}
	\noindent\textbf{Remark.} In the interesting case $(1-|z_0|^2)^2 |S_\varphi(z_0)|=2(1-|D_{\varphi}(z_0)|^2)$, the critical point $z_0$ does not have to be isolated: the function $$\varphi(z):=\dfrac{(1+z)^a-(1-z)^a}{(1+z)^a+(1-z)^a},\;\;\;0<a<1,$$ (see Example 2) has critical points all over the geodesic $(-1,1)$.
	\section{Examples}\label{sec4}
	First, we give an expression for the hyperbolic curvature $\kappa (z(t))$ of any trajectory $z(t)$, $\omega^-<t<\omega ^+$, in terms of the Schwarzian derivative and the second order operator $A_\varphi$, for any non constant holomorphic function $\varphi: \mathbb{D}\to \mathbb{D}$.
	\begin{Lem}\label{lemmacurvature}
		$\kappa(z(t))=-\dfrac{|A_\varphi(z(t))|}{2}\operatorname{Im}\Big [\dfrac{(1-|z(t)|^2)^2S_\varphi(z(t))}{A_\varphi^2(z(t))}  \Big]$.
	\end{Lem}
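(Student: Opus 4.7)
The plan is to compute $\kappa(z(t))$ directly from the ODE \eqref{eq2.3} satisfied by the trajectory and then simplify the result using the identities \eqref{eq3.2} for the partial derivatives of $A_\varphi$. Write $G(t,z):=(1-|z|^2)/(2tA_\varphi(z))$, so that $z'=G(t,z)$. Differentiating once more gives
\begin{equation*}
z''=\frac{\partial G}{\partial t}+\frac{\partial G}{\partial z}\,z'+\frac{\partial G}{\partial \bar z}\,\overline{z'}.
\end{equation*}
A short computation using \eqref{eq3.2}, together with the crucial fact that $|D_\varphi(z(t))|=t$ on the trajectory, yields
\begin{equation*}
\frac{\partial G}{\partial z}=-\frac{\bar z}{tA_\varphi}-\frac{(1-|z|^2)^2 S_\varphi}{4tA_\varphi^2}-\frac{1}{2t},\qquad \frac{\partial G}{\partial \bar z}=\frac{1-t^2}{2tA_\varphi^2}.
\end{equation*}
This is where the Schwarzian enters the picture, and where the equality $|D_\varphi|=t$ cleans up the $\bar z$-derivative.

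Next I would compute $\operatorname{Im}(\overline{z'}z'')$, which is what the Euclidean curvature $\kappa_e=\operatorname{Im}(\overline{z'}z'')/|z'|^3$ requires. The term coming from $\partial G/\partial t$ contributes a purely real multiple of $|z'|^2$ and drops out. The term from $\partial G/\partial \bar z$ contributes $(\partial G/\partial \bar z)(\overline{z'})^2$; since $(\overline{z'})^2=(1-|z|^2)^2/(4t^2\overline{A_\varphi}^2)$, this quantity is real (its denominator is $|A_\varphi|^4$) and hence also contributes nothing to the imaginary part. Therefore only $\partial G/\partial z$ survives, and one finds
\begin{equation*}
\kappa_e=-\frac{2|A_\varphi|}{1-|z|^2}\operatorname{Im}\frac{\bar z}{A_\varphi}-\frac{|A_\varphi|}{2(1-|z|^2)}\operatorname{Im}\frac{(1-|z|^2)^2S_\varphi}{A_\varphi^2}.
\end{equation*}

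Finally I pass from the Euclidean to the hyperbolic curvature by the standard conformal formula for the metric $|dz|/(1-|z|^2)$ used throughout the paper, namely
\begin{equation*}
\kappa=(1-|z|^2)\kappa_e+\frac{2\operatorname{Im}(\bar z z')}{|z'|}.
\end{equation*}
Because $z'=(1-|z|^2)/(2tA_\varphi)$, the boundary term evaluates to $2|A_\varphi|\operatorname{Im}(\bar z/A_\varphi)$, which exactly cancels the first summand of $(1-|z|^2)\kappa_e$. What remains is precisely
\begin{equation*}
\kappa(z(t))=-\frac{|A_\varphi(z(t))|}{2}\operatorname{Im}\!\left[\frac{(1-|z(t)|^2)^2S_\varphi(z(t))}{A_\varphi^2(z(t))}\right],
\end{equation*}
as claimed. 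The main obstacle is the clean bookkeeping: one has to handle the second-order partials of $A_\varphi$ without miscounting the $\bar z A_\varphi$, $A_\varphi^2$ and $|D_\varphi|^2$ cross-terms in \eqref{eq3.2}, and one has to recognize that the only piece of $\partial G/\partial \bar z$ that could survive is automatically real, so the Schwarzian enters via the $z$-derivative alone. Everything else is algebraic simplification.
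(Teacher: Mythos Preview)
Your proof is correct and follows essentially the same route as the paper's: both start from the hyperbolic curvature formula \eqref{eq4.1} (you split it as Euclidean curvature plus the conformal correction, which is the same identity), both invoke \eqref{eq3.2} to bring the Schwarzian in via the derivatives of $A_\varphi$, and both cancel the $\operatorname{Im}(\bar z/A_\varphi)$ term against the boundary term. The only organizational difference is that the paper packages the computation through $\operatorname{Im}[z''/z']=-\operatorname{Im}\big[\tfrac{1}{A_\varphi}\tfrac{d}{dt}A_\varphi\big]$ (a logarithmic derivative) before expanding $\tfrac{d}{dt}A_\varphi$ with the chain rule, while you compute $\overline{z'}z''$ from the partials of $G$ directly and discard the manifestly real pieces first; the algebra is equivalent.
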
\label{lem}
	\begin{proof}
		The hyperbolic curvature of the trajectory $z(t)$ verifies the formula (see \cite{MaMi94}, p. 84)
		\begin{equation}\label{eq4.1}
			\kappa(z(t))=\frac{1-|z(t)|^2}{|z'(t)|}\operatorname{Im}\Big [\frac{z''(t)}{z'(t)} \Big]
			+ 2\operatorname{Im}\Big [\frac{\overline{z(t)}z'(t)}{|z'(t)|}  \Big].
		\end{equation}
		From the initial value problem \eqref{eq2.1} we obtain
		\begin{align}\label{eq4.2}
			(1-|z(t)|^2)\operatorname{Im}\Big [\frac{z''(t)}{z'(t)}\Big]&=-2t\operatorname{Im}\Big [z'(t)\frac{d}{dt}A_\varphi(z(t))  \Big]\\
			&=-(1-|z(t)|^2)\operatorname{Im}\Big [ \frac{1}{A_\varphi(z(t))}\frac{d}{dt}A_\varphi(z(t)) \Big].\notag
		\end{align}
		Thus, combining \eqref{eq4.1} and \eqref{eq4.2},
		\begin{equation}\label{eq4.3}
			\kappa(z(t))=2|A_\varphi(z(t))|\Big [\operatorname{Im}\Big (\frac{\overline{z(t)}}{A_\varphi(z(t))} \Big)-t\operatorname{Im}\Big ( \frac{1}{A_\varphi(z(t))}\frac{d}{dt}A_\varphi(z(t))\Big)\Big].
		\end{equation} 
		Since 
		\begin{align*}
			t\frac{d}{dt}A_\varphi(z(t))=\frac{1-|z(t)|^2}{2A_\varphi(z(t))}\frac{\partial A_\varphi}{\partial z}(z(t))+\frac{1-|z(t)|^2}{2\overline{A_\varphi(z(t))}}\frac{\partial A_\varphi}{\partial \overline{z}}(z(t)),
		\end{align*}
		
		\noindent we get from \eqref{eq3.2}
		\begin{align}\label{eq4.4}
			\frac{t}{A_\varphi(z(t))}\frac{d}{dt}A_\varphi(z(t))=
			\frac{1}{2}\Big[\dfrac{(1-|z(t)|^2)^2S_\varphi(z(t))}{2A_\varphi^2(z(t))}&+2i\operatorname{Im}\Big(\frac{\overline{z(t)}}{A_\varphi(z(t))}\Big)\\&+\frac{|D_\varphi(z(t))|^2-1}{|A_\varphi(z(t))|^2}\Big].\notag
		\end{align}
		The result now follows from \eqref{eq4.3} and \eqref{eq4.4}.
	\end{proof}
	\noindent \textbf{Example 1.}
	Consider the function $\varphi(z)=e^{-g(z)}$ with
	$g(z)=\left(  \dfrac{1+z}{1-z}\right)  ^{a}$, $0<a<1$, $z\in\mathbb{D}$. Our main goal in this example is to show that any trajectory $z(t)$ starts at a point on the boundary of $\mathbb{D}$, as $t\to 0^+$, and ends at $-1$. We will prove the following statements:
	\begin{enumerate}
		\item[(a)] $A_{\varphi}(z)\neq 0,\infty$ for all $z\in \mathbb{D}$. This proves that $z(t)$ starts and ends on $\mathbb{T}$.
		\item[(b)] For all $\zeta\in\mathbb{T},\,\zeta\neq-1$, $\lim_{z\to\zeta}|D_\varphi(z)|=0$. This proves that $z(t)$ ends at $-1$ and $\omega^-=0$ since $t=|D_\varphi (z(t))|$.
		\item[(c)] The trajectory through the origin is the interval $(-1,1)$ starting at $1$ $(\omega^-=0)$ and ending at $-1$ $(\omega^+=a)$.
		\item[(d)] The level sets of $|D_\varphi|$ are open Jordan arcs, being $-1$ its end points.
		\item[(e)] For $\Gamma^-$ defined as in \eqref{eq2.4}, $\overline{\Gamma^-}\setminus \Gamma^-$ reduces to a single point.
		\item[(f)] $\omega^+=a$. This proves that the trajectory ends at $-1$ tangentially to the real axis.
	\end{enumerate}  
	(a) Direct computations from \eqref{eq1.2} give
	\begin{align}\label{eq4.5}
		A_{\varphi}(z)&=\frac{1-|z|^2}{1-z^2}\Big[a+z-a\Big (\frac{1+z}{1-z}\Big )^{a}\coth \Big ( \operatorname{Re}\Big (\frac{1+z}{1-z}\Big )^{a}\Big )\Big]-\overline{z}\notag\\
		&=\frac{a\left(  1-\left\vert z\right\vert ^{2}\right)  +2i\operatorname{Im}%
			z}{1-z^{2}}-a\frac{1-\left\vert z\right\vert ^{2}}{1-z^{2}}\left(\frac	{1+z}{1-z}\right)^{a}\coth \Big [ \operatorname{Re}\Big (\frac{1+z}{1-z}\Big )^{a}\Big ].
	\end{align}
	Write $z=\dfrac{\xi+iy}{1+i\xi y}$, $-1<\xi,y<1$, then%
	\[
	\frac{1+z}{1-z}=\frac{1+\xi}{1-\xi}\frac{1+iy}{1-iy},\;\;
	z=\frac{\xi\left(  1+y^{2}\right)  +iy\left(  1-\xi^{2}\right)  }%
	{1+\xi^{2}y^{2}},\;\;\;1-\left\vert z\right\vert ^{2}=\frac{\left(
		1-\xi^{2}\right)  \left(  1-y^{2}\right)  }{1+\xi^{2}y^{2}}.
	\]
	Using \eqref{eq4.5} we get
	\begin{align}\label{eq4.6}
		\frac{1-z^{2}}{a(  1-\left\vert z\right\vert ^{2})  }%
		A_{\varphi}(z) & =1+\frac{2i\operatorname{Im}z}{1-|z| ^{2}}-\Big (  \frac{1+z}{1-z}\Big )  ^a \coth \Big [
		\operatorname{Re}\Big (\frac{1+z}{1-z}\Big )^{a}\Big ] \notag \\
		& =1+\frac{2iy}{1-y^{2}}-\Big (\frac{1+\xi}{1-\xi}\Big )  ^{a}\Big [
		\cos ( 2a\tan^{-1}y )  +i\sin ( 2a\tan^{-1}y )\Big ]\\
		&\cdot \coth\Big [ \Big (  \frac{1+\xi}{1-\xi}\Big )^{a}\cos (  2a\tan^{-1}y )\Big ] .\notag
	\end{align}
	Since $\cos ( 2a\tan^{-1}y )\neq 0$ then 
	\begin{equation*}
		\operatorname{Re}\Big[\frac{1-z^{2}}{a(  1-\left\vert z\right\vert ^{2})  }%
		A_{\varphi}(z)\Big ]=0\iff 
		\Big (  \frac{1+\xi}{1-\xi}\Big )^{a}\coth\Big [ \Big (  \frac{1+\xi}{1-\xi}\Big )^{a}\cos (  2a\tan^{-1}y )\Big ]
		=\frac{1}{\cos ( 2a\tan^{-1}y )}.
	\end{equation*}
	Hence
	\begin{align*}
		\frac{1-z^{2}}{a(  1-\left\vert z\right\vert ^{2})  }%
		A_{\varphi}(z)=0 & \iff
		\frac{2y}{a(1-y^{2})}=\tan (  2a\tan^{-1}y )\iff y=0\\
		& \iff \Big (  \frac{1+\xi}{1-\xi}\Big )^{a}= \tanh \Big (  \frac{1+\xi}{1-\xi}\Big )^{a},
	\end{align*}
	which is impossible since $t>\tanh t$ for $t>0$. This shows that $A_{\varphi}(z)\neq0$ for all $z\in \mathbb{D}$.\\\\
	(b) Direct computations give
	\begin{equation*}
		\frac{1-|z|^2}{|1-z^2|}=\frac{1-y^2}{1+y^2}=\cos ( 2\tan^{-1}y ).
	\end{equation*}
	Then
	\begin{align}\label{eq4.7}
		|D_{\varphi}(z)|&=a\frac{1-|z|^2}{|1-z^2|}\frac{|g(z)|}{\sinh\operatorname{Re}(g(z))}\notag\\
		&= a\;\frac{\Big(\frac{1+\xi}{1-\xi}\Big)^{a}\cos ( 2\tan^{-1}y )}{\sinh\Big[\Big(\frac{1+\xi}{1-\xi}\Big)^{a}\cos ( 2a\tan^{-1}y )\Big]}
		=:|D_\varphi(\xi,y)|\\ &\leq a\;\frac{\Big(\frac{1+\xi}{1-\xi}\Big)^{a}\cos ( 2\tan^{-1}y )}{\sinh\Big[\Big(\frac{1+\xi}{1-\xi}\Big)^{a}\cos ( 2\tan^{-1}y )\Big]}\leq a.\notag
	\end{align}
	If $-1\neq \zeta \in \mathbb{T}$, and $z\to \zeta$, then $\Big(\dfrac{1+\xi}{1-\xi}\Big)^{a}$ remains bounded and $y\to \pm 1$. Therefore the result follows from the second equality in \eqref{eq4.7}.\\\\
	(c) We see from \eqref{eq4.5} that $A_\varphi(z)$ is real for $z$ real. Then, by the initial value problem \eqref{eq2.3}, the tangent vector to the trajectory through any point of the real axis is horizontal. Hence, $(-1,1)$ is a trajectory. Also, taking $y=0$ in \eqref{eq4.7} we obtain $\omega^+=\lim_{\xi\to-1}|D_\varphi(\xi,0)|=a$.\\\\
	(d) Let $J$ be a component of $C_\varphi(t)$. Since $|D_\varphi|$ has no critical points, $J$ does not reduce to a single point. Also, by part (a), $J$ is not a closed curve inside $\mathbb{D}$. Furthermore, by part (b), $J$ is a Jordan arc that starts and ends at $-1$; so, for $\xi\in(-1,1)$ sufficiently close to $-1$, the hyperbolic geodesic $\lambda_\xi$ orthogonal to the real axis through the point $\xi$ meets $J$ at more than one point. On the other hand, by part (c), there is a unique point  $x\in (-1,1)$ belonging to $C_\varphi(t)$. We will show that $x\in J$. If this were not true, then by the symmetry of $|D_\varphi(z)|$ with respect to the real axis (see \eqref{eq4.7}), we may assume that $J$ lies entirely in the upper half plane. However, for fixed $\xi$ sufficiently close to $-1$, the expression of $|D_\varphi(z)|$ in \eqref{eq4.7} is strictly decreasing with respect to $y>0$, which contradicts the fact that $J$ meets $\lambda_\xi$ at least twice. \\\\
	(e) Since $\operatorname{Re}A_\varphi(iy)$ in \eqref{eq4.6} is negative for all $y$, the tangent vectors $z'(t)$ at points of the imaginary axis have negative real part. Hence, $\overline{\Gamma^-}\setminus \Gamma^-$ lies entirely in the closure of one of the half-planes determined by the imaginary axis.\\
	Now we reason by contradiction assuming the existence of at least two different points in $\overline{\Gamma^-}\setminus \Gamma^-$. By the previous paragraph, we may assume that there is $\zeta\in\overline{\Gamma^-}\setminus \Gamma^-$, $\zeta\neq-1,1$, and a sequence $t_n$ converging to $\omega^-=0$, such that $z(t_n)\to\zeta$ and $\kappa(z(t_n))\to \infty$. Since the Schwarzian derivative of $\varphi$ is 
	$$S_\varphi(z)=\frac{2(1-a^2-a^2g(z)^2)}{(1-z^2)^2}$$ then, by Lemma \ref{lemmacurvature} and \eqref{eq4.5}, 
	\begin{align}\label{eq4.8}
		\kappa(z(t))&=-\Bigg |\frac{(1-|z(t)|^2)\Big[a+z(t)-a\Big (  \frac{1+z(t)}{1-z(t)}\Big )  ^a\coth \Big( \operatorname{Re}\Big (  \frac{1+z(t)}{1-z(t)}\Big )  ^a\Big )\Big]-\overline{z(t)}(1-z^2(t))}{|1-z^2(t)|}\Bigg |\notag\\
		&\cdot\operatorname{Im}\frac{(1-|z(t)|^2)^2\Big(1-a^2-a^2\Big (  \frac{1+z(t)}{1-z(t)}\Big )  ^{2a}\Big)}{\Big\{(1-|z(t)|^2)\Big[a+z(t)-a\Big (  \frac{1+z(t)}{1-z(t)}\Big )  ^a\coth \Big( \operatorname{Re}\Big (  \frac{1+z(t)}{1-z(t)}\Big )  ^a\Big )\Big]-\overline{z(t)}(1-z^2(t))\Big\}^2}.
	\end{align}
	Therefore $\lim_{n\to \infty}\kappa(z(t_n))=0$, a contradiction.\\\\
	(f) Assume that $\omega^+<a$ and let $C:=C_\varphi(\omega^+)$. Then the trajectory $\Gamma$ is not the interval $(-1,1)$ and, by the symmetry of $|D_\varphi|$ with respect to the real axis, we may also assume that $\Gamma$ lies entirely in upper half plane. By part (d) the level curves are nested. From \eqref{eq4.7} we see that, for fixed $0<y<1$,
	\[
	\lim_{\xi\to-1}|D_\varphi(\xi,y)|=\frac{a(1-y^2)}{1+y^2}\frac{1}{\cos(2a\tan^{-1} y)},
	\]
	which takes all values once in the interval $(0,a)$. Hence, there is a unique $y\in(0,1)$ such that $\lim_{\xi\to-1}|D_\varphi(\xi,y)|=\omega^+$. Therefore $\Gamma$ must be tangent at $-1$ to the circle through $iy$ and $\pm 1$, which makes an angle $\beta_0$ at $-1$ with the interval $(-1,1)$. Write $1+z(t)=r(t)e^{i\beta(t)}$. Then, as $t\to\omega^+$,
	\[
	-\frac{\overline{z(t)}(1-z(t)^2)}{1-|z(t)|^2}\to 1+i\tan\beta_0,
	\]
	and
	\begin{align*}
	\Big (  \frac{1+z(t)}{1-z(t)}\Big )  ^a\coth \Big( \operatorname{Re}\Big (  \frac{1+z(t)}{1-z(t)}\Big )  ^a\Big )&=\frac{\operatorname{Re}\Big (  \frac{1+z(t)}{1-z(t)}\Big )^{a}}{\tanh\Big[\operatorname{Re}\Big (  \frac{1+z(t)}{1-z(t)}\Big )^{a}\Big]}\Bigg[1+i\frac{\operatorname{Im}\Big (  \frac{1+z(t)}{1-z(t)}\Big )^{a}}{\operatorname{Re}\Big (  \frac{1+z(t)}{1-z(t)}\Big )^{a}}\Bigg]\\
	&\to 1+i\tan(a\beta_0).
	\end{align*}
	Hence, by \eqref{eq4.8}, $\kappa(z(t))\to 0$ as $t\to\omega^{+}$, which means that $\beta_0=0$, a contradiction. Therefore $\omega^{+}=a$.\\\\   
	\noindent \textbf{Example 2.}
	Consider the function $\varphi (z)=\dfrac{g(z)-1}{g(z)+1}$ with $g(z)=\left(  \dfrac{1+z}{1-z}\right)  ^{a}$, $0< a < 1$, $z\in\mathbb{D}$. The function $\varphi$ maps $\mathbb{D}$ conformally onto the symmetric lens-shaped domain between two circular arcs that meet $\pm 1$ under the angle $\pi a$. As in the previous example we write $z=\dfrac{\xi+iy}{1+i\xi y}$, $-1<\xi,y<1$, then%
	\begin{align*}
		|D_{\varphi}(z)|=a\frac{1-|z|^2}{|1-z^2|}\frac{|g(z)|}{|\operatorname{Re} g(z)|}=a\frac{1-y^2}{1+y^2}\frac{1}{\cos (2a\tan^{-1}y)}=a\frac{\cos (2\tan^{-1}y)}{\cos (2a\tan^{-1}y)}\leq a,
	\end{align*}
	with equality if and only if $y=0$. Hence, $|D_{\varphi}(z)|$ attains its maximum value $a$ at all points on the interval $(-1,1)$. Furthermore, the level set $C_{\varphi}(t)$, for $0 < t < a$, consists of two circular arcs symmetric about the real axis that meet $\pm 1$. Also, 
	\[
	(1-|z|^2)^2|S_{\varphi}(z)|=|S_{\varphi}(0)|=2(1-a^2)=2(1-|D_{\varphi}(z)|^2),\;\;\text{for all}\;\; -1 < z < 1; 
	\]
	and
	\[
	A_{\varphi}(z)=\frac{1-|z|^2}{1-z^2}\Big[a-ag(z)\frac{\operatorname{Re}g(z)}{|\operatorname{Re}(g(z))|^2}+z\Big]-\overline{z},
	\]
	which vanishes for all $-1<z<1$, as expected.\\\\
	\textbf{Example 3.} (See Example 5.3 in \cite{MePo00}) Let $0<\theta<\pi/2$, $a=K(\cos\theta)$ and $b=K(\sin\theta)$, where $K(\kappa)$ denotes the complete elliptic integral of the first kind; and consider $\varphi=h\circ g$, where
	\[
	h(w)=i\frac{1-e^{iw\pi/a}}{1+e^{iw\pi/a}}\;\;\text{and}\;\;g(z)=\int_{0}^{z}(1-2\cos(2\theta)\zeta^2+\zeta^4)^{-1/2}d\zeta\,.
	\]
	The function $\varphi$ maps $\mathbb{D}$ conformally onto the domain in $\mathbb{D}$ that is symmetric with respect to the real and imaginary axis, and bounded by the orthocircles through
	\[
	\pm i\big(1-e^{-\pi b/(2a)}\big)/\big(1+e^{-\pi b/(2a)}\big)
	\]
	and by two arcs of $\mathbb{T}$. The computations show that
	\begin{equation}\label{eq4.9}
		|D_\varphi(z)|=\frac{\pi}{2a}\frac{(1-|z|^2)|g'(z)|}{\cos\big(\frac{\pi}{a}\operatorname{Re}g(z)\big)}
	\end{equation}
	and
	\begin{equation}\label{eq4.10}
		S_\varphi(z)=2\frac{(c+\alpha^2)+(c^2-2\alpha c-3)z^2+(c+\alpha^2)z^4}{(1-2cz^2+z^4)^2}\,,
	\end{equation}
	where $\alpha=\pi/(2a)$ and $c=\cos(2\theta)$.
	It is not difficult to show from \eqref{eq4.9} that 
	\[
	\dfrac{\partial|D_\varphi(x)|}{\partial x}\Big |_{x=0}=0\;\;\;\text{and}\;\;\;\dfrac{\partial|D_\varphi(iy)|}{\partial y}\Big |_{y=0}=0\,;
	\]
	hence, $z=0$ is a critical point of $|D_\varphi|$. We also see from \eqref{eq4.10} that $|S_\varphi (0)|=2\Big(\dfrac{\pi^2}{4a^2}+\cos(2\theta)\Big)$. Numerical evidence shows that $\dfrac{\pi^2}{2a^2}+\cos(2\theta)>1$, which is equivalent to the inequality
	\[
	|S_\varphi(0)|>2\Big(1-|D_\varphi(0)|^2\Big).
	\]
	Therefore, by Theorem \ref{th3}, part $(iii)$, $z=0$ is a saddle point of $|D_\varphi|$. \\\\
	\textbf{Example 4.} Consider the Blaschke product 
	\begin{align*}
		\varphi (z) = \frac{z^{2}-c^{2}}{1-c^{2}z^{2}}\,,\,\,  0<c<1.
	\end{align*}
	Then
	\begin{align*}
		\frac{1-|z|^2}{1-|\varphi(z)|^2}=\frac{|1-c^{2}z^{2}|^{2}}{(1-c^{4})(1+|z|^{2})},\,\,\,
		\varphi '(z) = \frac{2(1-c^{4})z}{(1-c^{2}z^{2})^{2}}.
	\end{align*}
	Therefore
	\begin{equation*}
		|D_\varphi(z)|=\frac{2|z|}{1+|z|^2},\;\;A_\varphi(z)=(1-|z|^2)\frac{\partial\log |D_\varphi(z)|}{\partial z}=\frac{(1-|z|^2)^2}{2z(1+|z|^2)}.
	\end{equation*}
	In this example the level curves are the circles centered at the origin, and the trajectories have the parametrization $z(t)=\dfrac{te^{i\vartheta}}{1+\sqrt{1-t^2}}$, $0<t<1$ with $\vartheta$ fixed.\\\\

\end{document}